
\documentclass[12pt]{amsart}
\usepackage{amsmath,amsthm,amsfonts,amssymb,mathrsfs}
\date{\today}

\usepackage[cp1251]{inputenc}         

\usepackage{amsmath,amsthm,amsfonts,amssymb,mathrsfs}
\usepackage{eucal}

\usepackage{color}

\usepackage{amssymb,cite}
\usepackage[colorlinks,plainpages,citecolor=magenta, linkcolor=blue, backref]{hyperref}

\usepackage{hyperref}

  \setlength{\textwidth}{18.5truecm}
   \setlength{\textheight}{24.8truecm}
   \setlength{\oddsidemargin}{-28.5pt}
   \setlength{\evensidemargin}{-28.5pt}
   \setlength{\topmargin}{-30pt}


\newtheorem{theorem}{Theorem}

\newtheorem{proposition}{Proposition}
\newtheorem{corollary}{Corollary}

\theoremstyle{definition}
\newtheorem{definition}{Definition}
\newtheorem{example}{Example}

\theoremstyle{remark}
\newtheorem{remark}{Remark}
\newtheorem{question}{Question}

\numberwithin{equation}{section}

\begin{document}

\title[On topologization of subsemigroups of the bicyclic monoid]{On topologization of subsemigroups of the bicyclic monoid}

\author[A. Chornenka and O.~Gutik]{Adriana Chornenka and Oleg~Gutik}
\address{Ivan Franko National University of Lviv, Universytetska 1, Lviv, 79000, Ukraine}
\email{adriana.chornenka@lnu.edu.ua, oleg.gutik@lnu.edu.ua, ogutik@gmail.com}

\keywords{Bicyclic monoid, topological semigroup, semitopological semigroup, left topological semigroup, right topological semigroup, Baire, locally compact, discrete.}
\subjclass[2020]{22A15, 54A10, 54E52.}

\begin{abstract}
We show that if a subsemigroup $S$ of the bicyclic monoid ${\mathscr{C}}(p,q)$  contains infinitely many idempotents then $S$ admits only the  discrete Hausdorff shift-continuous topology. Also we proof that every right-continuous (left-continuous\emph) Hausdorff Baire topology  on the semigroup $\mathscr{C}_+(a,b)$ $(\mathscr{C}_-(a,b))$ is discrete and the same statement holds for the bicyclic monoid.
\end{abstract}

\maketitle

In this paper we shall follow the terminology of \cite{Carruth-Hildebrant-Koch=1983, Carruth-Hildebrant-Koch=1986, Clifford-Preston=1961, Clifford-Preston=1967, Engelking=1989, Lawson=1998, Ruppert=1984}. By $\omega$ and $\mathbb{N}$ we denote the set of non-negative integers and the set of positive integers, respectively.

\smallskip



A semigroup $S$ is called {\it inverse} if for any
element $x\in S$ there exists a unique $x^{-1}\in S$ such that
$xx^{-1}x=x$ and $x^{-1}xx^{-1}=x^{-1}$. The element $x^{-1}$ is
called the {\it inverse of} $x\in S$. If $S$ is an inverse
semigroup, then the function $\operatorname{inv}\colon S\to S$
which assigns to every element $x$ of $S$ its inverse element
$x^{-1}$ is called the {\it inversion}.
On an inverse semigroup $S$ the semigroup operation  determines the following partial order $\preccurlyeq$: $s\preccurlyeq t$ if and only if there exists $e\in E(S)$ such that $s=te$. This partial order is called the \emph{natural partial order} on $S$.

\begin{definition}
Let $X$, $Y$ and $Z$ be topological spaces. A map $f\colon X\times Y\to Z$, $(x,y)\mapsto f(x,y)$, is called
\begin{itemize}
  \item[$(i)$] \emph{right} [\emph{left}] \emph{continuous} if it is continuous in the right [left] variable; i.e., for every fixed $x_0\in X$ [$y_0\in Y$] the map $Y\to Z$, $y\mapsto f(x_0,y)$ [$X\to Z$, $x\mapsto f(x,y_0)$] is continuous;
  \item[$(ii)$] \emph{separately continuous} if it is both left and right continuous;
  \item[$(iii)$] \emph{jointly continuous} if it is continuous as a map between the product space $X\times Y$ and the space $Z$.
\end{itemize}
\end{definition}

\begin{definition}[\cite{Carruth-Hildebrant-Koch=1983, Ruppert=1984}]
Let $S$ be a non-void topological space which is provided with an associative multiplication (a semigroup operation) $\mu\colon S\times S\to S$, $(x,y)\mapsto \mu(x,y)=xy$. Then the pair $(S,\mu)$ is called
\begin{itemize}
  \item[$(i)$] a \emph{right topological semigroup} if the map $\mu$ is right continuous, i.e., all interior left shifts $\lambda_s\colon S\to S$, $x\mapsto sx$, are continuous maps, $s\in S$;
  \item[$(ii)$] a \emph{left topological semigroup} if the map $\mu$ is left continuous, i.e., all interior right shifts $\rho_s\colon S\to S$, $x\mapsto xs$, are continuous maps, $s\in S$;
  \item[$(iii)$] a \emph{semitopological semigroup} if the map $\mu$ is separately continuous;
  \item[$(iv)$] a \emph{topological semigroup} if the map $\mu$ is jointly continuous.
\end{itemize}

We usually omit the reference to $\mu$ and write simply $S$ instead of $(S,\mu)$. It goes without saying that every topological semigroup is
also semitopological and every semitopological semigroup is both a right and left topological semigroup.
\end{definition}

A topology $\tau$ on a semigroup $S$ is called:
\begin{itemize}
  \item a \emph{semigroup} topology if $(S,\tau)$ is a topological semigroup;
  \item an \emph{inverse semigroup} topology if $(S,\tau)$ is an inverse topological semigroup with continuous inversion;
  \item a \emph{shift-continuous} topology if $(S,\tau)$ is a semitopological semigroup;
  \item a \emph{left-continuous} topology if $(S,\tau)$ is a left topological semigroup;
  \item a \emph{right-continuous} topology if $(S,\tau)$ is a right topological semigroup.
\end{itemize}

The bicyclic monoid ${\mathscr{C}}(p,q)$ is the semigroup with the identity $1$ generated by two elements $p$ and $q$ subjected only to the condition $pq=1$. The semigroup operation on ${\mathscr{C}}(p,q)$ is determined as
follows:
\begin{equation*}
    q^kp^l\cdot q^mp^n=
    \left\{
      \begin{array}{ll}
        q^{k-l+m}p^n, & \hbox{if~} l<m;\\
        q^kp^n,       & \hbox{if~} l=m;\\
        q^kp^{l-m+n}, & \hbox{if~} l>m.
      \end{array}
    \right.
\end{equation*}
It is well known that the bicyclic monoid ${\mathscr{C}}(p,q)$ is a bisimple (and hence simple) combinatorial $E$-unitary inverse semigroup and every non-trivial congruence on ${\mathscr{C}}(p,q)$ is a group congruence \cite{Clifford-Preston=1961}.

\smallskip

It is well known that topological algebra studies the influence of
topological properties of its objects on their algebraic properties
and the influence of algebraic properties of its objects on their
topological properties. There are two main problems in topological
algebra: the problem of non-discrete topologization and the problem
of embedding into objects with some topological-algebraic
properties.

\smallskip

In mathematical literature the question about non-discrete
(Hausdorff) topologization of groups was posed by Markov \cite{Markov=1945}.
Pontryagin gave well known conditions a base at the unity of a group
for its non-discrete topologization (see Theorem~3.9 of \cite{Pontryagin=1966}).
In \cite{Olshansky=1980}  Ol'shanskiy constructed an infinite
countable group $G$ such that every Hausdorff group topology on $G$
is discrete. Taimanov presented in \cite{Taimanov=1973} a commutative semigroup $\mathfrak{T}$ which admits only discrete Hausdorff semigroup topology and gave in \cite{Taimanov=1975} sufficient conditions on a commutative semigroup to have a non-discrete semigroup topology. In \cite{Gutik=2016} it is proved that each $T_1$-topology with continuous shifts on $\mathfrak{T}$ is discrete.
The bicyclic monoid admits only the discrete semigroup Hausdorff topology \cite{Eberhart-Selden=1969}. Bertman and  West in \cite{Bertman-West=1976} extended this result for the case of Hausdorff semitopological semigroups.

\smallskip

In the paper \cite{Chornenka-Gutik=2023} we construct two non-discrete inverse semigroup $T_1$-topologies and a compact inverse shift-continuous $T_1$-topology on the bicyclic monoid ${\mathscr{C}}(p,q)$. Also we give  conditions on a $T_1$-topology $\tau$ on ${\mathscr{C}}(p,q)$ to be discrete. In particular, we show that if $\tau$ is an inverse semigroup $T_1$-topology on ${\mathscr{C}}(p,q)$ which satisfies one of the following conditions: $\tau$ is Baire, $\tau$ is quasi-regular or $\tau$ is semiregular, then $\tau$ is discrete.

\smallskip

Subsemigroups of then bicyclic monoid were studied in \cite{Descalco-Ruskuc-2005, Descalco-Ruskuc-2008, Makanjuola-Umar=1997}.
In \cite{Makanjuola-Umar=1997}  the following anti-isomorphic subsemigroups of the bicyclic monoid
\begin{equation*}
  \mathscr{C}_{+}(a,b)=\left\{b^ia^j\in\mathscr{C}(a,b)\colon i\leqslant j,\, i,j\in\omega\right\}
  \end{equation*}
and
\begin{equation*}
  \mathscr{C}_{-}(a,b)=\left\{b^ia^j\in\mathscr{C}(a,b)\colon i\geqslant j,\, i,j\in\omega\right\}
\end{equation*}
are studied. In the paper \cite{Gutik=2023} topologizations of the semigroups $\mathscr{C}_{+}(a,b)$ and $\mathscr{C}_{-}(a,b)$ are studied. In particular in \cite{Gutik=2023} it proved that every Hausdorff left-continuous (right-continuous) topology on $\mathscr{C}_{+}(a,b)$ ($\mathscr{C}_{-}(a,b)$) is discrete and there exists a compact Hausdorff topological monoid $S$ which contains $\mathscr{C}_{+}(a,b)$ ($\mathscr{C}_{-}(a,b)$) as a submonoid. Also, a non-discrete right-continuous (left-continuous) topology $\tau_+^p$ ($\tau_-^p$) on the semigroup $\mathscr{C}_{+}(a,b)$ ($\mathscr{C}_{-}(a,b)$) which is not left-continuous (right-continuous).
In \cite{Gutik=2026} is proved that the monoid $\mathscr{C}_{+}(a,b)$ (resp., $\mathscr{C}_{-}(a,b)$)  contains a family $\{S_\alpha\colon \alpha\in\mathfrak{c}\}$ of continuum many subsemigroups with the following properties: $(i)$ every left-continuous (resp., right-continuous) Hausdorff topology on $S_\alpha$ is discrete; $(ii)$ every semigroup $S_\alpha$ admits a non-discrete right-continuous (resp., left-continuous) Hausdorff topology which is not left-continuous (resp., right-continuous).

\smallskip

In this paper we show that if a subsemigroup $S$ of the bicyclic monoid ${\mathscr{C}}(p,q)$  contains infinitely many idempotents then $S$ admits only the  discrete Hausdorff shift-continuous topology. Also we proof that every right-continuous (left-continuous\emph) Hausdorff Baire topology  on the semigroup $\mathscr{C}_+(a,b)$ $(\mathscr{C}_-(a,b))$ is discrete and the same statement holds for the bicyclic monoid.

\begin{theorem}\label{theorem-1}
Let $S$ be a subsemigroup of the bicyclic semigroup ${\mathscr{C}}(p,q)$. If $S$ contains infinitely many idempotents then every shift-continuous Hausdorff topology on $S$ is discrete.
\end{theorem}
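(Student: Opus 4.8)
The plan is to prove directly that every element of $S$ is an isolated point of $(S,\tau)$, where $\tau$ is a fixed Hausdorff shift-continuous topology; discreteness is then immediate. First I would record the two algebraic facts that drive everything: the idempotents of $\mathscr{C}(p,q)$ are exactly the elements $q^kp^k$, $k\in\omega$, and they multiply by $q^kp^k\cdot q^mp^m=q^{\max\{k,m\}}p^{\max\{k,m\}}$. Consequently the hypothesis that $S$ contains infinitely many idempotents means precisely that $S$ contains $q^kp^k$ for an unbounded set of indices $k$. The key consequence I would extract is that for every element $a=q^ip^j\in S$ there exists an idempotent $e:=q^Kp^K\in S$ with $K>\max\{i,j\}$.

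Next I would read off, from the multiplication table, the four relevant shifts by this $e$. For the given $a=q^ip^j$ one computes $ea=q^Kp^{K-i+j}$ and $ae=q^{i+K-j}p^{K}$, both of which differ from $a$ (their $q$-exponent, resp. $p$-exponent, equals $K\ne i$, resp. $K\ne j$). For an arbitrary $x=q^mp^n$ one gets $ex=x$ whenever $m\ge K$ and $xe=x$ whenever $n\ge K$, while for $m<K$ one has $ex=q^Kp^{K-m+n}$ and for $n<K$ one has $xe=q^{m+K-n}p^{K}$. Thus the left shift $\lambda_e$ fixes every element whose $q$-exponent is at least $K$, and the right shift $\rho_e$ fixes every element whose $p$-exponent is at least $K$.

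With these formulas in hand the argument is a convergence/dichotomy argument. Assuming $a$ is not isolated, I would take a net $(x_\alpha)$ in $S\setminus\{a\}$ with $x_\alpha\to a$ and write $x_\alpha=q^{m_\alpha}p^{n_\alpha}$. If $m_\alpha\ge K$ holds cofinally, then along that subnet $\lambda_e(x_\alpha)=x_\alpha$, so by continuity of $\lambda_e$ (left-continuity) and uniqueness of limits in the Hausdorff space one would obtain $a=ea=q^Kp^{K-i+j}$, which is false; hence $m_\alpha<K$ eventually. The symmetric argument with $\rho_e$ (right-continuity) forces $n_\alpha<K$ eventually. But then $(x_\alpha)$ lies eventually in the finite set $\{q^mp^n:0\le m,n<K\}\cap S$, which contains $a$; since a finite subset of a $T_1$-space is discrete, the net must be eventually equal to $a$, contradicting $x_\alpha\ne a$. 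Therefore every $a\in S$ is isolated and $\tau$ is discrete.

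The point I expect to require the most care is that convergence in the abstract topology $\tau$ gives no control over the integer exponents $m_\alpha,n_\alpha$, so one cannot argue that the net approaches $a$ coordinatewise; the whole force of the proof lies in replacing such control by the purely topological cofinal/eventual dichotomy above, which is exactly why both the left and the right continuity of the multiplication are used, one for each coordinate. This is consistent with the fact recalled in the introduction that $\mathscr{C}_+(a,b)$ carries a non-discrete right-continuous topology, so that a one-sided hypothesis cannot suffice. The infinitude of the idempotents enters at a single, essential place: it is what guarantees, for each fixed $a$, an idempotent $e=q^Kp^K\in S$ whose index $K$ exceeds the finite exponents of $a$.
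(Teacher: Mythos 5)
Your proof is correct and follows essentially the same route as the paper: both arguments pick an idempotent $e=q^Kp^K\in S$ with $K>\max\{i,j\}$, observe that the left and right shifts by $e$ fix exactly the elements with large $q$- (resp.\ $p$-) exponent while moving the chosen point $a$, and conclude that $a$ lies in the finite open set of elements of $S$ with both exponents below $K$. The only difference is cosmetic: where you run a net/Hausdorff-limit argument to show the two fixed-point sets $eS$ and $Se$ are closed, the paper cites the standard fact that a retract of a Hausdorff space is closed.
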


\begin{proof}
Without loss of generality we may assume that the semigroup $S$ is infinite.

Fix an arbitrary element $b^ia^j$ of $S$.  Since the set $E(S)$ is infinite, there exists a positive integer $i_0$ such that $i_0\geqslant\max\{i,j\}+1$. Then the equalities
\begin{equation*}
  b^{i_0}a^{i_0}\cdot b^ka^l=
  \left\{
    \begin{array}{ll}
      b^ka^l,           & \hbox{if~} i_0\leqslant k;\\
      b^{i_0}a^{i_0-k+l}, & \hbox{if~} i_0>k
    \end{array}
  \right.
\end{equation*}
and
\begin{equation*}
  b^ka^l\cdot b^{i_0}a^{i_0}=
\left\{
    \begin{array}{ll}
      b^ka^l,           & \hbox{if~} i_0\leqslant l;\\
      b^{i_0}a^{i_0-l+k}, & \hbox{if~} i_0>l,
    \end{array}
  \right.
\end{equation*}
where $b^ka^l\in S$, imply that $A_{i_0}=S\setminus(Sb^{i_0}a^{i_0}\cup b^{i_0}a^{i_0}S)$ is infinite subset of $S$ and $b^{i}a^{j}\in A_{i_0}$. Also the above equalities imply that the mappings $\lambda_{i_0}\colon S\to S$, $b^ka^l\mapsto b^ka^l\cdot b^{i_0}a^{i_0}$ and $\rho_{i_0}\colon S\to S$, $b^ka^l\mapsto b^{i_0}a^{i_0}\cdot b^ka^l$ are retractions, and hence by \cite[1.5.C]{Engelking=1989} the set $A_{i_0}$ is open in $S$. This implies the point $b^ia^j$ has open finite neighbourhood in $S$, and hence it is an isolated point in the space. This completes the proof of the theorem.
\end{proof}

\begin{corollary}\label{corollary-2}
If $S$ is an inverse subsemigroup of the bicyclic semigroup ${\mathscr{C}}(p,q)$ then every shift-continuous Hausdorff topology on $S$ is discrete.
\end{corollary}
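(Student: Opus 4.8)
The plan is to deduce the corollary directly from Theorem~\ref{theorem-1} by showing that an infinite inverse subsemigroup of ${\mathscr{C}}(p,q)$ necessarily contains infinitely many idempotents; the finite case is then trivial, since every Hausdorff topology on a finite set is discrete.

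First I would recall two facts about the bicyclic monoid: its idempotents are exactly the elements $b^na^n$, $n\in\omega$, and the inverse of $b^ia^j$ is $b^ja^i$. The key computation is that for any $x=b^ia^j\in S$ the multiplication rule gives $xx^{-1}=b^ia^j\cdot b^ja^i=b^ia^i$ and $x^{-1}x=b^ja^i\cdot b^ia^j=b^ja^j$. Since $S$ is an inverse subsemigroup, it is closed under inversion and multiplication, so both $b^ia^i$ and $b^ja^j$ lie in $E(S)$. In particular, every element of $S$ contributes two idempotents to $E(S)$ whose indices are precisely the two exponents of that element.

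Next I would argue by contraposition. Suppose $E(S)$ is finite. Then $E(S)=\{b^na^n\colon n\in F\}$ for some finite set $F\subseteq\omega$; put $N=\max F$. By the previous step, each element $b^ia^j\in S$ forces both $b^ia^i\in E(S)$ and $b^ja^j\in E(S)$, hence $i,j\in F$ and so $i,j\leqslant N$. Consequently $S\subseteq\{b^ia^j\colon i,j\in F\}$, a set of at most $|F|^2$ elements, and therefore $S$ is finite. Equivalently, if $S$ is infinite then $E(S)$ is infinite.

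Finally I would combine the two cases. If $S$ is finite, then any Hausdorff topology on it is discrete and there is nothing more to prove. If $S$ is infinite, then by the step above $E(S)$ is infinite, so Theorem~\ref{theorem-1} applies and yields that every shift-continuous Hausdorff topology on $S$ is discrete. I do not expect a genuine obstacle here: the only point needing care is the verification that the left and right units $xx^{-1}$ and $x^{-1}x$ of each element are honest idempotents lying in $S$, and it is exactly this closure property that bounds the exponents and forces either finiteness or an infinite idempotent set.
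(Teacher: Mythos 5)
Your proof is correct. It reaches the same reduction to Theorem~\ref{theorem-1} as the paper, but by a different mechanism: you argue the contrapositive, observing that $xx^{-1}=b^ia^i$ and $x^{-1}x=b^ja^j$ lie in $E(S)$ for every $x=b^ia^j\in S$, so a finite idempotent set bounds both exponents of every element of $S$ and forces $|S|\leqslant|E(S)|^2$; the infinite case then hands off to Theorem~\ref{theorem-1}, and the finite case is trivial. The paper instead works directly: it fixes a single non-idempotent $b^ia^j\in S$ with $i<j$, notes $b^ja^i\in S$ by closure under inversion, and computes $(b^ja^i)^n\cdot(b^ia^j)^n=b^{i+n(j-i)}a^{i+n(j-i)}$, exhibiting infinitely many distinct idempotents at once (the case $S=E(S)$ being dismissed as trivial). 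Both arguments rest on the same underlying facts, namely closure under inversion and the uniqueness of inverses in $\mathscr{C}(p,q)$, which guarantees that the inverse of $b^ia^j$ taken inside $S$ is indeed $b^ja^i$. Your version buys the slightly stronger structural statement that an inverse subsemigroup of the bicyclic monoid with finitely many idempotents is itself finite; the paper's version is marginally more economical, needing only one witness element rather than a bound on all of $S$.
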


\begin{proof}
In the case when $S=E(S)$ the statement is trivial. Hence we assume that $S\neq E(S)$. Fix an arbitrary $b^ia^j\in S\setminus E(S)$. Without loss of generality we may assume that $i<j$. Since the semigroup $S$ is inverse, we obtain that $b^ja^i\in S$. Then for any positive integer $n$ the semigroup operation of the bicyclic semigroup implies that
\begin{align*}
  &(b^ia^j)^n=b^ia^{i+n(j-i)}\in S\setminus E(S), \\
  &(b^ja^i)^n=b^{i+n(j-i)}a^i\in S\setminus E(S),
\end{align*}
and hence
\begin{equation*}
  (b^ja^i)^n\cdot (b^ia^j)^n=b^{i+n(j-i)}a^i\cdot b^ia^{i+n(j-i)}=b^{i+n(j-i)}a^{i+n(j-i)}
\end{equation*}
is an idempotent of $S$ for any positive integer $n$. Next we apply Theorem~\ref{theorem-1}.
\end{proof}

We need the following proposition.

\begin{proposition}\label{proposition-3}
Let $S$ be an infinite subsemigroup of the bicyclic monoid $\mathscr{C}(a,b)$. If $S$ does not contain infinitely many idempotents, then either $S\subset \mathscr{C}_{+}(a,b)$ or $S\subset \mathscr{C}_{-}(a,b)$.
\end{proposition}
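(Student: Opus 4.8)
The plan is to prove the contrapositive: if $S\not\subset\mathscr{C}_+(a,b)$ and $S\not\subset\mathscr{C}_-(a,b)$, then $S$ contains infinitely many idempotents. The organizing tool is the \emph{excess map} $\sigma\colon\mathscr{C}(a,b)\to(\mathbb{Z},+)$, $\sigma(b^ia^j)=i-j$, which one checks directly from the multiplication rule to be a semigroup homomorphism; its fibre over $0$ is precisely the set $E(\mathscr{C}(a,b))=\{b^ia^i\colon i\in\omega\}$ of idempotents. Since $\mathscr{C}_+(a,b)=\{b^ia^j\colon\sigma(b^ia^j)\le 0\}$ and $\mathscr{C}_-(a,b)=\{b^ia^j\colon\sigma(b^ia^j)\ge 0\}$, the assumption that $S$ lies in neither means that $S$ contains an element $x=b^ia^j$ with $i>j$ and an element $y=b^ka^l$ with $k<l$, i.e. $\sigma(x)>0$ and $\sigma(y)<0$.

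First I would record the powers of $x$ and $y$ by a routine induction on the multiplication rule:
\begin{equation*}
  x^m=b^{\,j+m(i-j)}a^{\,j}, \qquad y^n=b^{\,k}a^{\,k+n(l-k)}, \qquad m,n\in\mathbb{N}.
\end{equation*}
Both stay inside $S$ since $S$ is a subsemigroup. Observe that the inner exponents $j$ of $x^m$ and $k$ of $y^n$ do not change with $m$ and $n$, so the case distinction in the product below will depend only on the fixed comparison of $j$ and $k$.

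Writing $d_1=i-j\ge 1$ and $d_2=l-k\ge 1$, the key step is to balance the excess: for $t\in\mathbb{N}$ put $m=td_2$ and $n=td_1$, so that $\sigma(x^my^n)=md_1-nd_2=0$. Computing $x^my^n=b^{\,j+md_1}a^{\,j}\cdot b^{\,k}a^{\,k+nd_2}$ from the multiplication rule, the three cases $j<k$, $j=k$, $j>k$ all produce an idempotent (as $\sigma$ of the product is $0$), and in each case its index is bounded below by $md_1=t\,d_1d_2$. Hence $\{x^{td_2}y^{td_1}\colon t\in\mathbb{N}\}$ is a set of idempotents of $S$ whose indices are unbounded, so $S$ contains infinitely many idempotents, contradicting the hypothesis. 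This establishes the contrapositive and hence the proposition.

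The computations (the induction for the powers and the three-case evaluation of the product) are entirely mechanical; the only point requiring a little care is the final one, namely that the idempotents obtained are genuinely distinct. This is exactly where bounding the index of $x^my^n$ from below by $md_1$ matters: it guarantees that the produced idempotents grow without bound as $t$ increases, rather than collapsing to a single fixed idempotent, which is what is needed to contradict the finiteness of $E(S)$.
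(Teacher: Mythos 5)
Your proof is correct and follows essentially the same route as the paper: take an element of positive excess and one of negative excess, raise them to powers chosen so the excesses cancel (your exponents $td_2$, $td_1$ are the paper's $kp$, $lp$), and multiply in the order that makes the resulting idempotents' indices grow without bound. The homomorphism $\sigma$ is a tidy way to package the case analysis, but the underlying computation is identical to the paper's.
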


\begin{proof}
Suppose to the contrary that there exists an infinite subsemigroup $S$ of the bicyclic monoid $\mathscr{C}(a,b)$ such that $|E(S)|<\infty$, $(S\setminus E(S))\cap\mathscr{C}_{+}(a,b)\neq\varnothing$ and $(S\setminus E(S))\cap\mathscr{C}_{-}(a,b)\neq\varnothing$. Then there exist $b^ia^{i+k}\in S\cap \mathscr{C}_{+}(a,b)$ and $b^{j+l}a^j\in S\cap \mathscr{C}_{-}(a,b)$ for some $i,j,k,l\in\omega$ with $k,l>0$. Since $S$ is a subsemigroup of the bicyclic monoid $\mathscr{C}(a,b)$, the semigroup operation of $\mathscr{C}(a,b)$ implies that for any positive integer $p$ we have that
\begin{equation*}
  (b^ia^{i+k})^{lp}=b^ia^{i+klp}\in S \qquad \hbox{and} \qquad (b^{j+l}a^j)^{kp}=b^{j+klp}a^j\in S.
\end{equation*}
Hence we obtain that the following elements
\begin{equation*}
  b^ia^{i+klp}\cdot b^{j+klp}a^j=
  \left\{
    \begin{array}{ll}
      b^ja^j, & \hbox{if~} i<j; \\
      b^ia^j, & \hbox{if~} i=j; \\
      b^ia^i, & \hbox{if~} i>j
    \end{array}
  \right.
\end{equation*}
and
\begin{equation*}
   b^{j+klp}a^j\cdot b^ia^{i+klp}=
   \left\{
    \begin{array}{ll}
      b^{i+klp}a^{i+klp}, & \hbox{if~} j<i; \\
      b^{j+klp}a^{i+klp}, & \hbox{if~} j=i; \\
      b^{j+klp}a^{j+klp}, & \hbox{if~} j>i
    \end{array}
  \right.
\end{equation*}
are idempotents of $S$. Also by the last equality we get that the semigroup $S$ contains an infinite subset of idempotents $\left\{b^{i+klp}a^{i+klp}\colon p=1,2,3,\ldots\right\}$, a contradiction. The obtained contradiction implies the statement of the proposition.
\end{proof}

Next we define the $p$-adic topology on the set of integers $\mathbb{Z}$. Fix an arbitrary prime positive integer $p$. For any integer $a$ and any positive integer $k$ we put $U_k(a)=a+p^k\mathbb{Z}$. The topology $\tau_p$ which is generated by the base $\mathscr{B}_p=\left\{U_k(a)\colon a\in\mathbb{Z},k=1,2,3,\ldots\right\}$ is called the $p$-adic topology on $\mathbb{Z}$. It is well known that the additive group of integers with the $p$-idic topology $\tau_p$ is a non-discrete topological group \cite{Pontryagin=1966}. This implies that the additive semigroup of non-negative (resp. positive) integers $(\omega,+)$ (resp. $(\mathbb{N},+)$) with the induced topology from $(\mathbb{Z},\tau_p)$ is a non-discrete Hausdorff topological semigroup which we denote by $\tau_p$. It is obvious that the family $\mathscr{B}_p=\left\{V_k(a)\colon a\in\mathbb{Z},k=1,2,3,\ldots\right\}$, where $V_k(a)=a+p^k\omega$ is a base of the topology $\tau_p$ on $(\omega,+)$ ($(\mathbb{N},+)$).

We observe that there exist a non-discrete right-continuous (left-continuous) topology  $\tau_p^+$    ($\tau_p^-$) on the semigroup $\mathscr{C}_{+}(a,b)$    ($\mathscr{C}_{-}(a,b)$) which is not left-continuous (right-continuous) \cite{Gutik=2023}. The topology  $\tau_p^+$ on $\mathscr{C}_{+}(a,b)$ is constructed in the following way. The semigroup operation of $\mathscr{C}_{+}(a,b)$ implies that for any non-negative integer $n$ that
\begin{equation*}
  \mathscr{C}_{+}^n(a,b)=\left\{b^na^{n+i}\colon i\in\omega\right\}
\end{equation*}
is a subsemigroup of $\mathscr{C}_{+}(a,b)$. Moreover the semigroup $\mathscr{C}_{+}^n(a,b)$ is isomorphic to the additive semigroup of non-negative integers $(\omega,+)$ by the mapping $\mathfrak{I}_n\colon \omega,\to \mathscr{C}_{+}^n(a,b)$, $i\mapsto b^na^{n+i}$. Then for any $b^na^{n+i}\in \mathscr{C}_{+}(a,b)$ the mapping $\mathfrak{I}_n$ generates the base of the topology $\tau_p^+$  at the point $b^na^{n+i}$ as the image of the base $\mathscr{B}_p(i)$ of the topology $\tau_p$ at the point $i$ \cite{Gutik=2023}. The topology $\tau_p^-$  on the semigroup $\mathscr{C}_{-}(a,b)$ is constructed by the dual way.

Theorem~\ref{theorem-1} and Proposition~\ref{proposition-3} motivate to pose the following question.

\begin{question}
Let $S$ be a subsemigroup of the monoid $\mathscr{C}_{+}(a,b)$ which has no infinitely many idempotents. Does $S$ admit shift-continuous (semigroup) Hausdorff topology?
\end{question}

\begin{example}
Fix and arbitrary $n,m\in\omega$ such that  $m\leqslant n$. We define
\begin{equation*}
  \mathscr{C}_{+}^{[m,n]}(a,b)= \bigcup_{k=m}^n \mathscr{C}_{+}^k(a,b).
\end{equation*}
The semigroup operation of $\mathscr{C}_{+}(a,b)$ implies that $\mathscr{C}_{+}^{[m,n]}(a,b)$ is a subsemigroup of $\mathscr{C}_{+}(a,b)$. Also it is obvious that the semigroup $\mathscr{C}_{+}^{[m,n]}(a,b)$ is isomorphic to the monoid $\mathscr{C}_{+}^{[0,n-m]}(a,b)$ by the mapping $b^sa^{s+i}\mapsto b^{s-m}a^{s-m+i}$.

For an arbitrary prime positive integer $p$ we define a topology $\tau_p^{m,n}$ on $\mathscr{C}_{+}^{[m,n]}(a,b)$ in the following way. For any $b^ia^{i+j}\in\mathscr{C}_{+}^{[m,n]}(a,b)$ with $i+j\leqslant n$ the point $b^ia^{i+j}$ is isolated in $(\mathscr{C}_{+}^{[m,n]}(a,b),\tau_p^{m,n})$. If $i+j>n$ then the family
\begin{equation*}
  \mathscr{B}_p^{m,n}(b^ia^{i+j})=\left\{V_s(b^ia^{i+j})\colon s\in\mathbb{N}\right\},
\end{equation*}
where $V_s(b^ia^{i+j})=\left\{b^ia^{i+j+t}\colon t\in p^s\omega\right\}$, is a base of the topology $\tau_p^{m,n}$ at the point $b^ia^{i+j}$. It is obvious  that $\tau_p^{m,n}$ is a Hausdorff non-discrete topology on $\mathscr{C}_{+}^{[m,n]}(a,b)$.
\end{example}

\begin{proposition}\label{proposition-4}
$\tau_p^{m,n}$ is a semigroup topology on $\mathscr{C}_{+}^{[m,n]}(a,b)$.
\end{proposition}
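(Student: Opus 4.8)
The plan is to verify directly that the multiplication $\mu\colon \mathscr{C}_{+}^{[m,n]}(a,b)\times \mathscr{C}_{+}^{[m,n]}(a,b)\to \mathscr{C}_{+}^{[m,n]}(a,b)$ is jointly continuous in the topology $\tau_p^{m,n}$. I fix $x_0=b^{s}a^{s+c}$ and $y_0=b^{t}a^{t+d}$ with $s,t\in\{m,\ldots,n\}$ and $c,d\in\omega$, put $z_0=x_0y_0$, and recall that the sets $V_\sigma(\cdot)$ together with the singletons at isolated points form a base of $\tau_p^{m,n}$. Hence it suffices, for each basic neighbourhood $W$ of $z_0$, to exhibit basic (or singleton) neighbourhoods $U\ni x_0$ and $V\ni y_0$ with $U\cdot V\subseteq W$.

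The first step is to read off the product from the semigroup operation of $\mathscr{C}(a,b)$ by comparing the exponent $s+c$ of $x_0$ with the exponent $t$ of $y_0$. A short computation gives $z_0=b^{s}a^{s+c+d}$ when $s+c\geqslant t$ (so $z_0$ lies in row $s$), and $z_0=b^{t-c}a^{t+d}$ when $s+c<t$ (so $z_0$ lies in row $t-c$); in either case the difference of the exponents of $z_0$ equals $c+d$. A useful preliminary reduction is the observation that if $z_0$ is isolated, i.e. its $a$-exponent does not exceed $n$, then both $x_0$ and $y_0$ are isolated as well, since in each regime their $a$-exponents are bounded above by that of $z_0$; in this situation one simply takes $U=\{x_0\}$ and $V=\{y_0\}$. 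Thus it remains to treat the case in which $z_0$ is not isolated and $W=V_\sigma(z_0)$ for some $\sigma\in\mathbb{N}$.

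I would then split into the two regimes detected above. In the regime $s+c\geqslant t$ I take $U=V_\sigma(x_0)$ and $V=V_\sigma(y_0)$ (replacing either by its singleton when the corresponding point is isolated). For $x=b^{s}a^{s+c+\alpha}$ and $y=b^{t}a^{t+d+\beta}$ with $\alpha,\beta\in p^\sigma\omega$ the $a$-exponent $s+c+\alpha$ of $x$ still dominates $t$, so the product equals $b^{s}a^{s+c+d+(\alpha+\beta)}$; since $\alpha+\beta\in p^\sigma\omega$ this lies in $W$, giving $U\cdot V\subseteq W$. In the regime $s+c<t$ the key point is that the rows are bounded by $n$: from $s+c<t\leqslant n$ we get that the $a$-exponent of $x_0$ is at most $n$, so $x_0$ is \emph{isolated}. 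Consequently I take $U=\{x_0\}$ and $V=V_\sigma(y_0)$; now only $y$ is perturbed, the inequality $s+c<t$ is preserved, and the product equals $b^{t-c}a^{t+d+\beta}$ with $\beta\in p^\sigma\omega$, which again lies in $W$.

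The step I expect to be the main obstacle is precisely the control of the branch of the bicyclic multiplication under perturbation: a priori, enlarging the $a$-exponent of the left factor could push a product from the branch $s+c<t$ into the branch $s+c\geqslant t$, and such a jump would destroy continuity. The resolution hinges on the finiteness of the strip $[m,n]$ of admissible rows. In the branch $s+c\geqslant t$ perturbations only increase $s+c$ and hence never cross the boundary, while in the branch $s+c<t$ the bound $t\leqslant n$ forces $x_0$ to be isolated, so the left factor cannot be perturbed at all. Once this case analysis is secured, the remaining verifications amount to routine $p$-adic bookkeeping with the sets $p^\sigma\omega$.
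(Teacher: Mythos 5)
Your proposal is correct and follows essentially the same route as the paper: a direct case-by-case verification that products of basic neighbourhoods land in a basic neighbourhood of the product, with the same key observation that a non-isolated left factor $b^{s}a^{s+c}$ has $s+c>n\geqslant t$, so the branch of the bicyclic multiplication is stable under perturbation. The paper organizes the cases by which of the two factors is isolated rather than by which branch of the multiplication applies, but the computations and the underlying idea coincide.
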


\begin{proof}
Fix arbitrary $b^{i_1}a^{i_1+j_1},b^{i_2}a^{i_2+j_2}\in\mathscr{C}_{+}^{[m,n]}(a,b)$. Then we have that
\begin{equation*}
  b^{i_1}a^{i_1+j_1}\cdot b^{i_2}a^{i_2+j_2}=
  \left\{
    \begin{array}{ll}
      b^{i_2-j_1}a^{i_2+j_2}, & \hbox{if~} i_1+j_1<i_2;\\
      b^{i_1}a^{i_2+j_2},     & \hbox{if~} i_1+j_1=i_2;\\
      b^{i_1}a^{i_1+j_1+j_2}, & \hbox{if~} i_1+j_1>i_2.
    \end{array}
  \right.
\end{equation*}

We consider all possible cases.

Suppose that $i_1+j_1\leqslant n$ and $i_2+j_2\leqslant n$. Then $b^{i_1}a^{i_1+j_1}$ and $b^{i_2}a^{i_2+j_2}$ are isolated points in the topological space $(\mathscr{C}_{+}^{[m,n]}(a,b),\tau_p^{m,n})$, and hence in this case the semigroup oparation is continuous.

Suppose that $i_1+j_1\leqslant n$ and $i_2+j_2>n$. Then $b^{i_1}a^{i_1+j_1}$ is isolated point in $(\mathscr{C}_{+}^{[m,n]}(a,b),\tau_p^{m,n})$. Simple verifications show that for any positive integer $s$ we have that
\begin{align*}
  b^{i_1}a^{i_1+j_1}\cdot V_s(b^{i_2}a^{i_2+j_2})&= b^{i_1}a^{i_1+j_1}\cdot \left\{b^{i_2}a^{i_2+j_2+t}\colon t\in p^s\omega\right\}=\\
  &=
\left\{
    \begin{array}{ll}
      \left\{b^{i_2-j_1}a^{i_2+j_2+t}\colon t\in p^s\omega\right\}, & \hbox{if~} i_1+j_1<i_2;\\
      \left\{b^{i_1}a^{i_2+j_2+t}\colon t\in p^s\omega\right\},     & \hbox{if~} i_1+j_1=i_2;\\
      \left\{b^{i_1}a^{i_1+j_1+j_2+t}\colon t\in p^s\omega\right\}, & \hbox{if~} i_1+j_1>i_2
    \end{array}
  \right.=\\
  &=
\left\{
    \begin{array}{ll}
      V_s(b^{i_2-j_1}a^{i_2+j_2}), & \hbox{if~} i_1+j_1<i_2;\\
      V_s(b^{i_1}a^{i_2+j_2}),     & \hbox{if~} i_1+j_1=i_2;\\
      V_s(b^{i_1}a^{i_1+j_1+j_2}), & \hbox{if~} i_1+j_1>i_2.
    \end{array}
  \right.
\end{align*}

Suppose that $i_1+j_1> n$ and $i_2+j_2\leqslant n$. Then $b^{i_2}a^{i_2+j_2}$ is isolated point in $(\mathscr{C}_{+}^{[m,n]}(a,b),\tau_p^{m,n})$ and $i_2<i_1+j_1$. By usual calculations for any positive integer $s$ we get that
\begin{align*}
  V_s(b^{i_1}a^{i_1+j_1})\cdot b^{i_2}a^{i_2+j_2}&=\left\{b^{i_1}a^{i_1+j_1+t}\colon t\in p^s\omega\right\}\cdot b^{i_2}a^{i_2+j_2}= \\
   &=\left\{b^{i_1}a^{i_1+j_1+j_2+t}\colon t\in p^s\omega\right\}=\\
   &=V_s(b^{i_1}a^{i_1+j_1+j_2}).
\end{align*}

Suppose that $i_1+j_1> n$ and $i_2+j_2>n$. Then $i_2<i_1+j_1$.  By usual calculations for any positive integer $s$ we have that
\begin{align*}
  V_s(b^{i_1}a^{i_1+j_1})\cdot V_s(b^{i_2}a^{i_2+j_2})&
  =\left\{b^{i_1}a^{i_1+j_1+t_1}\colon t_1\in p^s\omega\right\}\cdot \left\{b^{i_2}a^{i_2+j_2+t_2}\colon t_2\in p^s\omega\right\}= \\
   &=\left\{b^{i_1}a^{i_1+j_1+t_1+j_2+t_2}\colon t_1,t_2\in p^s\omega\right\}\subseteq \\
   &\subseteq \left\{b^{i_1}a^{i_1+j_1+j_2+t}\colon t\in p^s\omega\right\}=\\
   &=V_s(b^{i_1}a^{i_1+j_1+j_2}).
\end{align*}

The above arguments imply the statement of the proposition.
\end{proof}

We recall that a topological space $X$ is said to be  \emph{Baire} if for each sequence $A_1, A_2,\ldots, A_i,\ldots$ of dense open subsets of $X$ the intersection $\displaystyle\bigcap_{i=1}^\infty A_i$ is a dense subset of $X$ \cite{Haworth-McCoy=1977}.

\begin{theorem}\label{theorem-5}
Every right-continuous \emph{(}left-continuous\emph{)} Hausdorff Baire topology $\tau$ on the semigroup $\mathscr{C}_+(a,b)$ $(\mathscr{C}_-(a,b))$ is discrete.
\end{theorem}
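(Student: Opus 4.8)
The two assertions are anti-isomorphic, so it suffices to treat a right-continuous Hausdorff Baire topology $\tau$ on $\mathscr{C}_+(a,b)$; the statement for $\mathscr{C}_-(a,b)$ (and for the whole bicyclic monoid) follows by the dual argument. Recall that right continuity means that every left shift $\lambda_s\colon x\mapsto sx$ is continuous. The plan is to first localise discreteness on the ``zero row'' $\mathscr{C}_+^0(a,b)=\{a^l\colon l\in\omega\}$ and then to spread it along the diagonals using continuity of suitable shifts.

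First I would show that $\mathscr{C}_+^0(a,b)$ is open. The map $\lambda_{ba}\colon x\mapsto ba\cdot x$ is a continuous idempotent self-map (a retraction) whose fixed-point set is exactly $\{b^ka^l\in\mathscr{C}_+(a,b)\colon k\geqslant 1\}$; since $(\mathscr{C}_+(a,b),\tau)$ is Hausdorff this fixed-point set is closed by \cite[1.5.C]{Engelking=1989}, and hence its complement $\mathscr{C}_+^0(a,b)$ is open. Being an open subspace of a Baire space, $\mathscr{C}_+^0(a,b)$ is again Baire \cite{Haworth-McCoy=1977}, and as a subsemigroup it is a commutative semitopological semigroup isomorphic to $(\omega,+)$ via $a^l\mapsto l$.

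The heart of the argument is the claim that $\mathscr{C}_+^0(a,b)$ is discrete. Here I would combine two facts. First, since $\mathscr{C}_+^0(a,b)$ is countable and Baire, its set $I$ of isolated points is dense: otherwise the complement of $\overline{I}$ would be a nonempty open, hence Baire, countable subspace without isolated points, which is meager in itself, a contradiction. Second, $I$ is downward closed: the shift $\sigma_k\colon a^l\mapsto a^{k+l}$ is continuous and $\sigma_k^{-1}(\{a^n\})=\{a^{n-k}\}$ for $0\leqslant k\leqslant n$, so $a^n\in I$ forces $a^{n-k}\in I$. Thus $I$ is an initial segment of $\omega$; being dense in the infinite Hausdorff space $\mathscr{C}_+^0(a,b)$ it cannot be finite (a finite set is closed), so $I=\omega$ and $\mathscr{C}_+^0(a,b)$ is discrete. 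As $\mathscr{C}_+^0(a,b)$ is open, every $a^l$ is then isolated in $\mathscr{C}_+(a,b)$.

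Finally I would propagate isolatedness to an arbitrary point $b^ia^{i+j}$. The left shift $\lambda_{a^i}$ is continuous, and a direct computation with the multiplication rule shows that
\[
  \lambda_{a^i}^{-1}\bigl(\{a^{i+j}\}\bigr)=\bigl\{b^{i'}a^{i'+j}\colon 0\leqslant i'\leqslant i\bigr\},
\]
a finite set. Since $a^{i+j}$ is isolated, this preimage is open; and a finite open subset of a Hausdorff space consists of isolated points, so $b^ia^{i+j}$ is isolated. As $b^ia^{i+j}$ was arbitrary, $\tau$ is discrete. I expect the third step to be the main obstacle: converting the Baire hypothesis into genuinely isolated points and then showing they exhaust $\omega$. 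The two decisive observations are that the isolated points of a countable Baire space are dense and that continuity of shifts forces the isolated points of $\mathscr{C}_+^0(a,b)$ to form an initial segment — it is exactly this combination that excludes the non-Baire $p$-adic-type topologies $\tau_p^+$.
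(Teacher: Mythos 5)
Your proof is correct and rests on the same three pillars as the paper's own argument: a continuous left shift by an idempotent acting as a retraction, which makes its image closed and hence yields an open (thus Baire) subspace; the existence of isolated points in a countable Hausdorff Baire space; and the observation that the preimage of an isolated point under a continuous left shift is a finite open set, whose members are therefore isolated. The only organizational difference is that you first prove the whole bottom row $\mathscr{C}_+^0(a,b)\cong(\omega,+)$ is discrete (using that its set of isolated points is dense and downward closed, hence all of $\omega$) and then pull back along $\lambda_{a^i}$, whereas the paper locates a single suitable isolated point $b^{x_0}a^{y_0}$ in some row lying over the given element and pulls it back through one shift; both routes are equally valid.
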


\begin{proof}
We shall prove the statement of the theorem only for the semigroup $\mathscr{C}_+(a,b)$, because the semigroups $\mathscr{C}_+(a,b)$ and $\mathscr{C}_-(a,b)$ are anti-isomorphic \cite{Gutik=2023, Makanjuola-Umar=1997}.

Fix an arbitrary $b^{i_0}a^{j_0}\in\mathscr{C}_+(a,b)$. Since every left shift on $(\mathscr{C}_+(a,b),\tau)$ is continuous and $b^{i_0+1}a^{i_0+1}$ is an idempotent of $\mathscr{C}_+(a,b)$, the mapping $\lambda_{b^{j_0+1}a^{j_0+1}}\colon \mathscr{C}_+(a,b)\to\mathscr{C}_+(a,b)$, $b^sa^t\mapsto b^{j_0+1}a^{j_0+1} \cdot b^sa^t$ is a continuous retraction. Then by \cite[1.5.C]{Engelking=1989} the retract $b^{j_0+1}a^{j_0+1}\mathscr{C}_+(a,b)$ is a closed subset of the topological space $(\mathscr{C}_+(a,b),\tau)$. It is obvious that $b^{i_0}a^{j_0}\notin b^{j_0+1}a^{j_0+1}\mathscr{C}_+(a,b)$.

We define
\begin{equation*}
A_{j_0+1}=\left\{b^ia^j\in\mathscr{C}_+(a,b)\colon i+j\leqslant 2(j_0+1) \right\}.
\end{equation*}
Then $A_{j_0+1}$ is a finite subset of $\mathscr{C}_+(a,b)$ and $b^{i_0}a^{j_0}\in A_{j_0+1}$. Since the space $(\mathscr{C}_+(a,b),\tau)$ is Hausdorff,
\begin{equation*}
S=\mathscr{C}_+(a,b)\setminus (A_{j_0+1}\cup b^{j_0+1}a^{j_0+1}\mathscr{C}_+(a,b))
\end{equation*}
is open in $(\mathscr{C}_+(a,b),\tau)$, and hence by Proposition~1.14 of \cite{Haworth-McCoy=1977} the space $S$ is Baire. By Proposition~1.30 of \cite{Haworth-McCoy=1977} the space $S$ contains infinitely many isolated points in $S$, because the set $S$ is infinite and countable. Then there exists a non-negative integer $x_0\leqslant j_0$ such that the set $S_{x_0}=\{b^{x_0}a^y\colon y\geqslant x_0\}$ contains infinitely many isolated points of $S$. This implies that there exists a positive integer $y_0$ such that $y_0-j_0>x_0\geqslant 0$, and hence $b^xa^{y_0+i_0-j_0}\in \mathscr{C}_+(a,b)$. The semigroup operation of $\mathscr{C}_+(a,b)$ implies that
\begin{equation*}
  b^{x_0}a^{y_0+i_0-j_0} \cdot b^{i_0}a^{j_0}=b^{x_0}a^{y_0},
\end{equation*}
because $y_0+i_0-j_0>x_0+i_0>i_0$. Since $(\mathscr{C}_+(a,b),\tau)$ is a left topological semigroup, we have that the set of solutions $U$ of the equation
\begin{equation*}
  b^{x_0}a^{y_0+i_0-j_0} \cdot X=b^{x_0}a^{y_0}
\end{equation*}
is an open subset of $(\mathscr{C}_+(a,b),\tau)$ which contains the point $b^{i_0}a^{j_0}$. By Lemma I.1.$(ii)$ of \cite{Eberhart-Selden=1969} the set $U$ is finite. Since $(\mathscr{C}_+(a,b),\tau)$ is a Hausdorff space, the point $b^{i_0}a^{j_0}$ is isolated in $(\mathscr{C}_+(a,b),\tau)$. This completes the proof of the theorem.
\end{proof}

A topological space $X$ is called \emph{locally compact}, if for any point $x\in X$ there exists an open neighbourhood $U(x)$ such that the closure $\operatorname{cl}_X(U(x))$ of $U(x)$ is a compact set \cite{Engelking=1989}.
Since every locally compact space is Baire \cite{Engelking=1989}, Theorem~\ref{theorem-5} implies the following corollary.

\begin{corollary}\label{corollary-6}
Every right-continuous \emph{(}left-continuous\emph{)} Hausdorff locally compact topology on the semigroup $\mathscr{C}_+(a,b)$ $(\mathscr{C}_-(a,b))$ is discrete.
\end{corollary}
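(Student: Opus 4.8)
The plan is to read this off as an immediate consequence of Theorem~\ref{theorem-5}, since the only gap between the two statements is the implication ``locally compact $\Rightarrow$ Baire''. Thus I would not reprove anything about the combinatorics of $\mathscr{C}_+(a,b)$; instead I would simply promote a given locally compact topology to a Baire topology and then quote Theorem~\ref{theorem-5} verbatim. Concretely, I would argue only for $\mathscr{C}_+(a,b)$ in the right-continuous case and dispatch the left-continuous case on $\mathscr{C}_-(a,b)$ by the anti-isomorphism between the two semigroups, exactly as in the proof of Theorem~\ref{theorem-5}.

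In detail: let $\tau$ be a right-continuous Hausdorff locally compact topology on $\mathscr{C}_+(a,b)$. I would first invoke the classical topological fact, recorded just before the statement and available in \cite{Engelking=1989}, that every locally compact (Hausdorff) space is a Baire space. Hence $(\mathscr{C}_+(a,b),\tau)$ is a right-continuous Hausdorff Baire topological space, and Theorem~\ref{theorem-5} applies directly to conclude that $\tau$ is discrete. For the parenthetical case, a left-continuous Hausdorff locally compact topology on $\mathscr{C}_-(a,b)$ is carried by the anti-isomorphism $\mathscr{C}_-(a,b)\to\mathscr{C}_+(a,b)$ to a right-continuous Hausdorff locally compact topology on $\mathscr{C}_+(a,b)$, which is discrete by the previous sentence; since the anti-isomorphism is a homeomorphism for these topologies, the original topology is discrete as well.

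There is essentially no obstacle here: the entire mathematical content lives in Theorem~\ref{theorem-5}, and the corollary is a one-line deduction. The only point that deserves a moment's care is that the ``locally compact $\Rightarrow$ Baire'' implication is usually stated for locally compact \emph{Hausdorff} (or regular) spaces, but this causes no difficulty because the topology $\tau$ in the hypothesis is assumed Hausdorff. I would therefore keep the proof to the two sentences above, mirroring the paper's own framing in the sentence preceding the statement.
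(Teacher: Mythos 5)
Your proposal is correct and matches the paper's own (one-line) argument exactly: the corollary is deduced from Theorem~\ref{theorem-5} via the fact that every locally compact Hausdorff space is Baire. The only superfluous step is re-running the anti-isomorphism for $\mathscr{C}_-(a,b)$, since Theorem~\ref{theorem-5} as stated already covers that case directly.
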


\begin{remark}
In \cite{Gutik=2023} a non-discrete non-Baire Hausdorff topology $\tau^+_p$ on the semigroup $\mathscr{C}_+(a,b)$ such that $(\mathscr{C}_+(a,b),\tau^+_p)$ is a metrizable right topological semigroup is constructed.
\end{remark}

Theorem~\ref{theorem-7} extends results of Theorem~1 from \cite{Chornenka-Gutik=2023} onto Hausdorff right topological and left topological semigroups.

\begin{theorem}\label{theorem-7}
Every right-continuous \emph{(}left-continuous\emph{)} Hausdorff Baire topology $\tau$ on the bicyclic semigroup $\mathscr{C}(a,b)$ is discrete.
\end{theorem}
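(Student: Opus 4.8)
The plan is to establish the right-continuous case and then deduce the left-continuous one by duality, using that the bicyclic monoid $\mathscr{C}(a,b)$ is anti-isomorphic to itself via the inversion $\iota\colon b^ia^j\mapsto b^ja^i$. Since $\iota$ is a self-bijection that reverses the multiplication, it interchanges left and right shifts, so it turns a left-continuous topology into a right-continuous one while preserving the Hausdorff property, Baireness and discreteness; the two assertions are therefore equivalent. Thus I would assume that $\tau$ is right-continuous (all left shifts $\lambda_s$ continuous) and show that every point is isolated. Fix an arbitrary $b^{i_0}a^{j_0}\in\mathscr{C}(a,b)$ and choose an idempotent $e=b^na^n$ with $n>i_0$. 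As in the computation in the proof of Theorem~\ref{theorem-1}, the left shift $\lambda_e$ is a continuous retraction of $\mathscr{C}(a,b)$ onto the right ideal $e\mathscr{C}(a,b)=\{b^ka^l\colon k\geqslant n\}$; by \cite[1.5.C]{Engelking=1989} this retract is closed, and since $i_0<n$ the point $b^{i_0}a^{j_0}$ lies in its open complement.

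Next I would analyse this complement $S=\mathscr{C}(a,b)\setminus e\mathscr{C}(a,b)=\{b^ka^l\colon 0\leqslant k\leqslant n-1\}$, which is a countably infinite union of the $n$ ``rows'' $S_k=\{b^ka^l\colon l\in\omega\}$. Being open in the Baire space $\mathscr{C}(a,b)$, the subspace $S$ is Baire (Proposition~1.14 of \cite{Haworth-McCoy=1977}); being infinite, countable and Hausdorff, it then has infinitely many isolated points (Proposition~1.30 of \cite{Haworth-McCoy=1977}). As these infinitely many isolated points are distributed among the finitely many rows $S_0,\ldots,S_{n-1}$, a pigeonhole argument yields an index $x_0\leqslant n-1$ for which $S_{x_0}$ contains infinitely many of them. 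Because the $a$-exponents along a single row are unbounded, this gives an isolated point $b^{x_0}a^{y_0}$ of $S$ (hence of $\mathscr{C}(a,b)$, as $S$ is open) with $y_0>j_0$.

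Finally I would pull this isolated point back to $b^{i_0}a^{j_0}$ along a left shift. Set $c=b^{x_0}a^{y_0+i_0-j_0}$; since $y_0>j_0$ we have $y_0+i_0-j_0>i_0$, so the multiplication table of $\mathscr{C}(a,b)$ gives $c\cdot b^{i_0}a^{j_0}=b^{x_0}a^{y_0}$. Because $\lambda_c$ is continuous and $\{b^{x_0}a^{y_0}\}$ is open, the solution set $U=\lambda_c^{-1}(\{b^{x_0}a^{y_0}\})$ is an open neighbourhood of $b^{i_0}a^{j_0}$, and by Lemma~I.1.$(ii)$ of \cite{Eberhart-Selden=1969} it is finite. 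In a Hausdorff (hence $T_1$) space a point possessing a finite open neighbourhood is isolated, so $b^{i_0}a^{j_0}$ is isolated; as it was arbitrary, $\tau$ is discrete.

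The step I expect to be the main obstacle is the bookkeeping in the middle paragraph: one must guarantee both that the complement of the closed right ideal decomposes into only finitely many rows, so that the pigeonhole principle applies, and that the selected isolated point can be taken with $a$-exponent exceeding $j_0$, which is precisely what forces the product $c\cdot b^{i_0}a^{j_0}$ into the branch of the multiplication returning $b^{x_0}a^{y_0}$. The remaining ingredients---continuity of left shifts, the Baire-versus-countable dichotomy, and the finiteness of one-sided solution sets in $\mathscr{C}(a,b)$---are then applied essentially as in the proof of Theorem~\ref{theorem-5}.
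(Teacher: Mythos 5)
Your argument is correct and is essentially the adaptation of the proof of Theorem~\ref{theorem-5} that the paper itself invokes (it only remarks that the proof is ``similar to Theorem~\ref{theorem-5}''): a continuous left-shift retraction onto the closed right ideal $b^na^n\mathscr{C}(a,b)$, the Baire/countable argument producing an isolated point $b^{x_0}a^{y_0}$ in a suitable row, and the finite open solution set of $b^{x_0}a^{y_0+i_0-j_0}\cdot X=b^{x_0}a^{y_0}$ via Lemma~I.1.$(ii)$ of \cite{Eberhart-Selden=1969}. Your only deviations are harmless streamlinings: you make the left/right duality explicit via the inversion anti-isomorphism, and you omit the auxiliary finite set $A_{j_0+1}$, which is indeed unnecessary here because, unlike in $\mathscr{C}_+(a,b)$, the element $b^{x_0}a^{y_0+i_0-j_0}$ lies in $\mathscr{C}(a,b)$ as soon as $y_0>j_0$.
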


The proof of Theorem~\ref{theorem-7} is similar to Theorem~\ref{theorem-5}.

\smallskip

Theorem~\ref{theorem-7} implies

\begin{corollary}\label{corollary-8}
Every right-continuous \emph{(}left-continuous\emph{)} Hausdorff locally compact topology on the bicyclic semigroup $\mathscr{C}(a,b)$ is discrete.
\end{corollary}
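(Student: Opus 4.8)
The plan is to deduce Corollary~\ref{corollary-8} directly from Theorem~\ref{theorem-7}, which I may assume, by invoking the standard fact---already recalled in the excerpt and attributed to \cite{Engelking=1989}---that every locally compact space is Baire. First I would observe that a right-continuous (left-continuous) Hausdorff \emph{locally compact} topology $\tau$ on the bicyclic semigroup $\mathscr{C}(a,b)$ is, in particular, a right-continuous (left-continuous) Hausdorff \emph{Baire} topology: passing to local compactness strengthens only the Baire-type hypothesis, while leaving the separation and shift-continuity hypotheses of Theorem~\ref{theorem-7} untouched. Hence Theorem~\ref{theorem-7} applies verbatim and forces $\tau$ to be discrete, in both the right-continuous and the left-continuous case simultaneously. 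Concretely the argument is two lines: (i) cite locally compact $\Rightarrow$ Baire; (ii) feed the resulting Baire topology into Theorem~\ref{theorem-7}. No appeal to the anti-isomorphism between $\mathscr{C}_+(a,b)$ and $\mathscr{C}_-(a,b)$ is needed here, since Theorem~\ref{theorem-7} already states both variants; and because the deduction passes through the Baire property rather than through compactness itself, no separation axiom beyond Hausdorffness is required. The one point I would double-check is that the definition of local compactness in force (every point has an open neighbourhood with compact closure) indeed yields the Baire property in this generality---which it does---so that the hypotheses of Theorem~\ref{theorem-7} are genuinely met.

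At the level of the corollary there is essentially no obstacle: the real content sits in Theorem~\ref{theorem-7}, whose proof is only indicated as ``similar to Theorem~\ref{theorem-5}.'' Thus the step I would actually want to secure is that adaptation to the full monoid. The plan there is to mimic the proof of Theorem~\ref{theorem-5}: fix $b^{i_0}a^{j_0}$, pick an idempotent $b^{m}a^{m}$ with $m>i_0$ so that the continuous left shift $\lambda_{b^{m}a^{m}}$ is a retraction whose retract $b^{m}a^{m}\mathscr{C}(a,b)=\{b^ia^j\colon i\geqslant m\}$ is closed (by \cite[1.5.C]{Engelking=1989}) and does not contain $b^{i_0}a^{j_0}$; delete this closed set, together with a finite set containing the point, to obtain an open, hence Baire (Proposition~1.14 of \cite{Haworth-McCoy=1977}), subspace $S$ lying in the finitely many columns $\{b^ia^j\colon i<m\}$. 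Since a nonempty countable Baire space has a dense---hence, for the infinite $S$, infinite---set of isolated points (Proposition~1.30 of \cite{Haworth-McCoy=1977}), the pigeonhole principle gives a single column containing infinitely many isolated points of $S$.

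Choosing an isolated point $b^{x_0}a^{y_0}$ of that column with $y_0>j_0$, and using the identity $b^{x_0}a^{y_0+i_0-j_0}\cdot b^{i_0}a^{j_0}=b^{x_0}a^{y_0}$ together with left-continuity, would exhibit $b^{i_0}a^{j_0}$ inside the open solution set of $b^{x_0}a^{y_0+i_0-j_0}\cdot X=b^{x_0}a^{y_0}$, which is finite by Lemma~I.1.$(ii)$ of \cite{Eberhart-Selden=1969}; Hausdorffness then makes $b^{i_0}a^{j_0}$ isolated, and hence $\tau$ discrete. The only genuine difference from the $\mathscr{C}_+(a,b)$ case is that here $i_0,j_0$ are unconstrained, so the column index set is $\{0,\dots,m-1\}$ and the key multiplication identity requires $y_0>j_0$ in place of the condition $y_0-j_0>x_0$ used for $\mathscr{C}_+(a,b)$; this is the spot I would verify most carefully before declaring the corollary proved.
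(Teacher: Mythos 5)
Your proposal is correct and is exactly the paper's argument: the paper derives Corollary~\ref{corollary-8} from Theorem~\ref{theorem-7} via the fact that locally compact spaces are Baire, just as it derived Corollary~\ref{corollary-6} from Theorem~\ref{theorem-5}. Your additional sketch of how Theorem~\ref{theorem-7} adapts the proof of Theorem~\ref{theorem-5} (replacing the constraint $y_0-j_0>x_0$ by $y_0>j_0$, with the retract $b^ma^m\mathscr{C}(a,b)=\{b^ia^j\colon i\geqslant m\}$) is also sound and matches what the paper leaves implicit.
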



\end{document}